\documentclass[reqno]{amsart}
\usepackage[utf8]{inputenc}
\usepackage{amsmath}
\usepackage{amsfonts}
\usepackage{amssymb}
\usepackage{amsthm}
\usepackage{mathrsfs}
\usepackage[a4paper, margin=1.5in]{geometry}

\newtheorem{theorem}{Theorem}
\newtheorem{lemma}{Lemma}

\theoremstyle{definition}
\newtheorem{definition}{Definition}

\newtheorem{remark}{Remark}

\newcommand{\dnint}[1]{\left\|{#1}\right\|}
\renewcommand{\mod}{\;\mathrm{mod}\;}

\title{A Note on Diophantine Approximation with Restricted Denominators}
\author{Chance Sanford}
\address{Madison, Wisconsin, USA}
\email{sanfordchance@gmail.com}

\date{January 2026}

\begin{document}

\begin{abstract}
    In this note we explore rational approximations to irrational numbers whose denominators are restricted to a subset of the natural numbers.  We introduce a specific notion of density for such subsets and use it to establish a restricted analog of a theorem of Dirichlet's.
\end{abstract}

\maketitle
\section{Introduction}
Let us begin by recalling an important corollary of Dirichlet's approximation theorem, found for instance in \cite{schmidt1980}.
\begin{theorem}[Dirichlet]\label{thm:dirichlet}
For any irrational number $\xi$ there are infinitely many coprime pairs $(p,q) \in \mathbb{Z} \times \mathbb{N}$ such that
\begin{equation*}
    \left\vert{\xi- \frac{p}{q}}\right\vert < \frac{1}{q^{2}}.
\end{equation*}
\end{theorem}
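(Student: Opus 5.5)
The plan is to derive the statement from the pigeonhole form of Dirichlet's approximation theorem. Then I will show that the approximations produced can be taken in lowest terms and cannot repeat, because $\xi$ is irrational.

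\emph{Step 1 (pigeonhole).} Fix an integer $N \geq 1$ and consider the $N+1$ numbers $\{q\xi\}$ for $q = 0,1,\dots,N$. Here $\{\cdot\}$ denotes the fractional part. Partition $[0,1)$ into the $N$ intervals $[j/N,(j+1)/N)$. Two of the $N+1$ points lie in the same interval, say for indices $0 \le q_1 < q_2 \le N$. Put $q = q_2 - q_1 \in \{1,\dots,N\}$ and $p = \lfloor q_2\xi\rfloor - \lfloor q_1 \xi\rfloor$. Then
\begin{equation*}
|q\xi - p| < \frac{1}{N}.
\end{equation*}
The inequality is strict because the intervals are half-open and the two points differ by less than their length $1/N$. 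Consequently
\begin{equation*}
\left\vert \xi - \frac{p}{q}\right\vert < \frac{1}{qN} \le \frac{1}{q^2}.
\end{equation*}

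\emph{Step 2 (coprimality).} If $d = \gcd(p,q) > 1$, replace $(p,q)$ by $(p/d, q/d) = (p',q')$. Then $|q'\xi - p'| = |q\xi - p|/d < 1/N$ still holds, and $q' \le N$. So the same inequality $|\xi - p'/q'| < 1/(q'N) \le 1/q'^2$ holds. We may therefore assume each pair is coprime.

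\emph{Step 3 (infinitely many).} This is the only point needing care. Suppose only finitely many coprime pairs $(p_1,q_1),\dots,(p_k,q_k)$ satisfied the inequality. Since $\xi$ is irrational, each quantity $|q_i\xi - p_i|$ is strictly positive. Let $\delta = \min_i |q_i \xi - p_i| > 0$ and choose $N$ with $1/N < \delta$. Step 1 together with Step 2 then produces a coprime pair $(p,q)$ with $|q\xi - p| < 1/N < \delta$ satisfying the inequality. This pair must differ from every $(p_i,q_i)$, which is a contradiction.

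I expect no real obstacle in this argument. The only subtle points are keeping the inequality strict in Step 1 and using irrationality to guarantee $\delta > 0$ in Step 3.
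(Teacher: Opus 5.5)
Your proof is correct and follows the route the paper points to. The paper gives no proof of its own; it presents the statement as a corollary of Dirichlet's approximation theorem and cites Schmidt, and your Step 1 is exactly that pigeonhole theorem, with Steps 2--3 the standard reduction to lowest terms plus the irrationality argument forcing infinitely many distinct pairs.
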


With Dirichlet's result in mind, we may ask the following question.   What happens when the denominators of the approximating rationals are restricted to some subset of the natural numbers? One would expect that if the restricting subset is too sparse, then it would be impossible to obtain as strong of a bound as $q^{-2}$ given in Dirichlet's theorem.

A number of authors have considered this question over the years.  Generally, research has focused on restrictions to arithmetically significant subsets of the natural numbers.  For instance, Vinogradov \cite{vinogradov1947} initiated the study of Diophantine approximation with denominators restricted to the set of primes, which we denote by $\mathcal{P}$. Currently, the best result in this direction is the following theorem, due to Matom{\"a}ki \cite{matomaki2009}.
\begin{theorem}[Matom{\"a}ki]\label{thm:matomaki}
For any irrational number $\xi$ and real number $\varepsilon > 0$, there are infinitely many pairs $(a,p) \in \mathbb{Z} \times \mathcal{P}$ such that
\begin{equation*}
    \left\vert{\xi- \frac{a}{p}}\right\vert < \frac{1}{p^{4/3-\varepsilon}}.
\end{equation*}
\end{theorem}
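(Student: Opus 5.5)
The plan follows the Vinogradov--Vaughan--Harman route, sharpened at the final stage as in Matom{\"a}ki \cite{matomaki2009}.

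\textbf{Reduction to counting primes.} Fix $\varepsilon>0$ and use Theorem~\ref{thm:dirichlet} to choose a coprime pair $(b,q)$ with $|\xi-b/q|<q^{-2}$ and $q$ arbitrarily large. Take a scale $N$ tied to $q$, say $N=q^{c}$ with $c$ close to $1$ and fixed according to the ranges in the sieve step below, and set $\delta=N^{-1/3+\varepsilon}$. It suffices to show that
\begin{equation*}
\mathcal{A}=\#\{p\in\mathcal{P}: N/2<p\le N,\ \dnint{p\xi}<\delta\}
\end{equation*}
is positive for all large $N$. Indeed, such a $p$ and the nearest integer $a$ to $p\xi$ satisfy
\begin{equation*}
\left\vert \xi-\frac{a}{p}\right\vert<\frac{\delta}{p}\ll\frac{N^{-1/3+\varepsilon}}{p}\le\frac{1}{p^{4/3-\varepsilon'}},
\end{equation*}
since $p\asymp N$. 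Letting $q\to\infty$ then gives infinitely many pairs. Write $\mathcal{A}=\sum_{N/2<n\le N} a_n\mathbf{1}_{\mathcal{P}}(n)$, where $a_n$ is the indicator of $\dnint{n\xi}<\delta$. We will replace $a_n$ by a smooth minorant built from a Fej\'er/Vaaler-type trigonometric polynomial of length $H=\delta^{-1}N^{\varepsilon}$.

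\textbf{Sieve decomposition.} An asymptotic formula for $\mathcal{A}$ is out of reach at this exponent. Instead we apply Harman's sieve to get a lower bound $\mathcal{A}\ge(\kappa+o(1))\,\delta N/\log N$ with some $\kappa>0$. Buchstab's identity expresses $\mathbf{1}_{\mathcal{P}}$ through sums $S(\mathcal{B},z)$, and we compare the sequence $(a_n)$ with the sequence with constant weight $2\delta$. This comparison requires two kinds of estimates.
\begin{itemize}
\item \emph{Type I:} $\sum_{m\le M}\bigl|\sum_{n\sim N/m}(a_{mn}-2\delta)\bigr|=o(\delta N)$ for $M$ as large as roughly $N^{2/3}$. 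After Fourier expansion this reduces to $\sum_{h\le H}\sum_{m\le M}\min\bigl(N/m,\dnint{hm\xi}^{-1}\bigr)$. The standard lemma on sums of $\min(X,\dnint{k\xi}^{-1})$ with $|\xi-b/q|<q^{-2}$ bounds this by about $(HM+N/q+q)N^{\varepsilon}$, which is acceptable when $q\approx N$.
\item \emph{Type II:} the bilinear forms $\sum_{m\sim M}\sum_{n}\alpha_m\beta_n(a_{mn}-2\delta)$ with $|\alpha_m|,|\beta_n|\le 1$, for $M$ in an interval around $N^{1/3}$ to $N^{1/2}$. Cauchy--Schwarz, followed by expanding and summing over $h$, leads to counting solutions of $\dnint{h(m_1-m_2)n\xi}$ small. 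The classical treatment of this count gives a Type II range good enough for exponents like $3/10$ (Harman) or $2/7$.
\end{itemize}

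\textbf{The crux.} The main obstacle is widening the Type II range, and accepting a small number of discarded Buchstab terms, enough to push $\delta$ down to $N^{-1/3+\varepsilon}$. In the Type II sums I would first apply Cauchy--Schwarz to the $m$-variable while keeping $h$ inside, so that $h$ and $n$ combine into a product variable. I would then write $\xi=b/q+\theta$, which turns the inner sums into incomplete exponential sums modulo $q$ with phases $e(bhmn/q)$. After completing these sums, the problem becomes bounding averages of Kloosterman sums. For that step I would invoke the Deshouillers--Iwaniec bounds on sums of Kloosterman sums, which give extra saving precisely in the critical range $M\approx N^{1/3}$. The remaining Buchstab terms that admit neither Type I nor Type II treatment are dropped, since this is allowed in a lower-bound sieve. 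One then checks numerically that the discarded contribution is less than the main term, so $\kappa>0$. I expect this final balancing, in which the Kloosterman-sum input decides whether the admissible Type II range covers everything needed for $\kappa>0$ at exponent $1/3$, to be the genuinely hard step.
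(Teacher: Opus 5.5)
The paper does not prove this statement. It quotes it from Matom\"aki as background, and the closing remarks explain why the Diophantine-density method cannot reach it. Measured against the literature, your reduction is fine and your architecture is the one used there: a prime count $\#\{N/2<p\le N:\dnint{p\xi}<\delta\}$ at a scale tied to a good denominator $q$, Type I information up to $M\approx\delta N\approx N^{2/3}$, Type II information on $[N^{1/3},N^{1/2}]$, and Kloosterman-sum input. But the proposal stops exactly where the theorem lives.

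\textbf{The Type II step.} The Type II estimate you actually describe is the classical one, and by your own account it falls short of $1/3$. Everything beyond it rests on the sentence ``after completing these sums, the problem becomes bounding averages of Kloosterman sums,'' and that does not work as stated.
\begin{itemize}
\item With $\xi=b/q+\theta$, the phases $e(bhmn/q)$ are linear in every variable. Completing a linear phase modulo $q$ only yields congruence conditions. After your Cauchy--Schwarz over $m$ you get something like $h_1n_1\equiv h_2n_2+\ell\bar{b}\ (\mathrm{mod}\ q)$ with $|\ell|\ll q/M$, not Kloosterman sums. Those can only arise once some variable enters through a modular inverse, for instance after solving such a congruence for one variable. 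You would then still have to put the resulting sums into a form to which Deshouillers--Iwaniec applies.
\item The Deshouillers--Iwaniec bounds improve on Weil only through an average over moduli, and your modulus $q$ is fixed. So some other variable must be made to play the role of the modulus. You would then have to check, under the $q$--$N$ relation this forces, that the bound gives admissible Type II information for every $M$ needed at $\delta=N^{-1/3+\varepsilon}$.
\end{itemize}
None of this is carried out. Without an explicit Type II range, the final step (``check numerically that $\kappa>0$'') cannot even be set up. Whether any Buchstab terms must be discarded at all at exponent $1/3$ depends on precisely that range. So the proposal reduces the theorem to its hardest ingredient rather than proving it.

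\textbf{Two smaller points.}
\begin{itemize}
\item \emph{Positivity.} If your weight is itself a trigonometric-polynomial minorant of the indicator of $\dnint{x}<\delta$ (Selberg--Vaaler type), it must be negative somewhere. A trigonometric polynomial that is $\ge 0$ and vanishes on an interval is identically zero. The terms $S(\mathcal{B}_{p_1\cdots p_k},p_k)$ you discard in the lower-bound sieve are then no longer nonnegative. Keep the sieved sequence nonnegative, using the indicator itself or a nonnegative smooth bump supported in $\dnint{x}<\delta$. Use Fourier truncation only inside the Type I/II lemmas.
\item \emph{Type I bookkeeping.} In evaluating the Type I sum you dropped two factors: the $\delta$ carried by the Fourier coefficients, and the $H$ in the term $NH/q$ (coming from $N/m=Nh/(hm)\le NH/(hm)$). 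As written, $(HM+N/q+q)N^{\varepsilon}=o(\delta N)$ forces $M\le\delta^{2}N\approx N^{1/3}$. With both factors restored one gets $\ll(M+N/q+\delta q)N^{3\varepsilon}$. That does give $M$ up to about $\delta N\approx N^{2/3}$, provided $q$ lies slightly inside $[\delta^{-1},N]$.
\end{itemize}
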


The proof Matom{\"a}ki's theorem, as well as similar results, fall squarely in the domain of analytic number theory.  Consequently, they are of quite a different character than that of the proofs of the classical approximation results, like Dirichlet's theorem given above.

In this paper, we take a more combinatorial approach, introducing a certain notion of density for a subset of $\mathbb{N}$. We show that if $\mathcal{A} \subseteq \mathbb{N}$ is sufficiently dense, then there exists an analog of Dirichlet's theorem where the denominators of the approximating rationals are restricted to $\mathcal{A}$.

Before moving on we need to define some notation.  First, let $\mathbb{N} := \{1,2,3,\dots\}$ denote the set of natural numbers.  Next, we will write $f \ll g$ if, for two functions $f$ and $g$, there exists a constant $C > 0$, such that $\vert{f(x)}\vert \leq C \vert{g(x)}\vert$ for all $x$ in their common domain.  If $f \ll g$ and $g \ll f$, then we write $f \asymp g$.  Lastly, let $\dnint{x}$ denote the distance of $x$ to the nearest integer.

\section{Main results}
We begin this section by defining our notion of \textit{Diophantine density} for a subset of $\mathbb{N}$.  Afterwards, we present and prove the two principle results of this note.
\begin{definition}
    Let $\delta \in [0,1]$. A set $\mathcal{A} \subseteq \mathbb{N}$ is said to have a \textit{Diophantine density} of $\delta$, if there exists a constant $C > 0$, depending only on $\mathcal{A}$, such that for every coprime pair $(p,q) \in \mathbb{Z} \times \mathbb{N}$ with $q$ sufficiently large,
    \begin{equation*}
    \min_{v\in\mathcal{A},v\leq q}\dnint{\frac{v p}{q}} \leq \frac{C}{q^{\delta}}.
\end{equation*}
\end{definition}

\begin{remark}
    Here we would like to point out that, if a set $\mathcal{A} \subseteq \mathbb{N}$ has a Diophantine density $\delta > 0$, then it must contain an infinite number of elements. To prove this, assume that $\mathcal{A}$ is finite with a density greater than $0$, and consider the sequence of convergents $(p_{n}/q_{n})_{n\geq 0}$ of an irrational number $\xi$.  By definition, for large enough $n$ we must have
    \begin{equation*}
    \min_{v\in\mathcal{A},v\leq q_{n}}\dnint{\frac{v p_{n}}{q_{n}}} \leq \frac{C}{q_{n}^{\delta}}.
\end{equation*}
But since $\mathcal{A}$ is finite and the sequence of convergents is infinite, by Dirichlet's \textit{schubfachprinzip}, there must be a single $v \in \mathcal{A}$ such that 
    \begin{equation*}
    \dnint{\frac{v p_{n}}{q_{n}}} \leq \frac{C}{q_{n}^{\delta}},
\end{equation*}
for infinitely many $n$.  Let us denote the subsequence of $n$'s for which this is true by $(n_{k})_{k\geq 0}$. Taking the limit as $k\to\infty$ and keeping in mind that $\delta > 0$, we find that
\begin{equation*}
    \dnint{v \xi} = \lim_{k\to\infty}\dnint{\frac{v p_{n_{k}}}{q_{n_{k}}}} \leq \lim_{k\to\infty}\frac{C}{q_{n_{k}}^{\delta}} = 0.
\end{equation*}
But this implies that $\xi$ is rational, which is a contradiction.  As a result, we conclude that $\mathcal{A}$ cannot be finite.
\end{remark}

Before presenting our first theorem, we recall the following lemma which states that the convergents of an irrational number provide an explicit sequence of rational numbers which satisfy the inequality in Theorem \ref{thm:dirichlet}.  We direct the reader to Schmidt's monograph \cite{schmidt1980} for a proof of this result.

\begin{lemma}\label{lem:cnvgt_diff}
    Let $p_{n}/q_{n}$ be the $n$-th convergent of $\xi$.  Then
    \begin{equation*}
        \left\vert{\xi - \frac{p_{n}}{q_{n}}}\right\vert < \frac{1}{q^2_{n}}.
    \end{equation*}
\end{lemma}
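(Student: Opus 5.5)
We would prove Lemma \ref{lem:cnvgt_diff} directly from the recursive structure of the continued fraction expansion $\xi = [a_0; a_1, a_2, \dots]$. We use the standard conventions $p_{-1} = 1$, $q_{-1} = 0$, $p_0 = a_0$, $q_0 = 1$, and the recurrences $p_{n} = a_n p_{n-1} + p_{n-2}$, $q_n = a_n q_{n-1} + q_{n-2}$. Let $\xi_{n+1} = [a_{n+1}; a_{n+2}, \dots]$ denote the $(n+1)$-st complete quotient. The plan is to compute the error $\xi - p_n/q_n$ exactly and then bound its denominator from below.

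\textbf{Step 1: determinant identity.} By induction on $n$ using the recurrences, we have
\begin{equation*}
p_n q_{n-1} - p_{n-1} q_n = (-1)^{n-1}.
\end{equation*}

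\textbf{Step 2: exact formula for $\xi$.} Since $\xi = [a_0; a_1, \dots, a_n, \xi_{n+1}]$, the same recurrences (which are formal identities in the last partial quotient) give
\begin{equation*}
\xi = \frac{\xi_{n+1} p_n + p_{n-1}}{\xi_{n+1} q_n + q_{n-1}}.
\end{equation*}

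\textbf{Step 3: exact error.} Subtracting $p_n/q_n$ and applying Step 1, we get
\begin{equation*}
\left\vert \xi - \frac{p_n}{q_n} \right\vert = \frac{\vert p_{n-1} q_n - p_n q_{n-1} \vert}{q_n(\xi_{n+1} q_n + q_{n-1})} = \frac{1}{q_n(\xi_{n+1} q_n + q_{n-1})}.
\end{equation*}

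\textbf{Step 4: strict lower bound on the denominator.} It remains to show $\xi_{n+1} q_n + q_{n-1} > q_n$. Because $\xi$ is irrational, its expansion is infinite. Hence $\xi_{n+1} = a_{n+1} + 1/\xi_{n+2}$ with $a_{n+1} \ge 1$ and $1/\xi_{n+2} > 0$, so $\xi_{n+1} > 1$. Together with $q_{n-1} \ge 0$ and $q_n \ge 1$, this gives $\xi_{n+1} q_n + q_{n-1} > q_n$. Substituting into Step 3 yields $\vert \xi - p_n/q_n \vert < 1/q_n^2$, as claimed.

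The only delicate point is the strictness of the inequality, especially at $n = 0$. There $q_{-1} = 0$, so we cannot rely on $q_{n-1}$ being positive. This is why Step 4 uses $\xi_{n+1} > 1$, which comes from irrationality, rather than a comparison $q_{n+1} > q_n$; the latter can fail when $n = 0$ and $a_1 = 1$.
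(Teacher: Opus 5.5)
Your proof is correct, and it is the standard complete-quotient argument: the exact error $1/(q_n(\xi_{n+1}q_n+q_{n-1}))$ combined with $\xi_{n+1}>1$. The paper gives no proof of its own here and simply refers to Schmidt's monograph, where this is the argument used. Your caution at $n=0$ is sound but not strictly necessary: in the usual phrasing the strictness comes from $\xi_{n+1}>a_{n+1}$, which gives $\vert \xi - p_n/q_n\vert < 1/(q_nq_{n+1}) \le 1/q_n^2$ using only $q_{n+1}\ge q_n$.
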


Now we are able to state our first theorem, which is a restricted analog of Dirichlet's theorem.

\begin{theorem}\label{thm:main}
    Suppose that $\mathcal{A} \subseteq \mathbb{N}$ has a Diophantine density of $\delta > 0$.  For any irrational number $\xi$ and $\varepsilon > 0$, there are infinitely many coprime pairs $(u,v) \in \mathbb{Z} \times \mathcal{A}$ such that
    \begin{equation*}
    \left\vert{\xi - \frac{u}{v}}\right\vert < \frac{1}{v^{1+\delta - \varepsilon}}.
    \end{equation*}
\end{theorem}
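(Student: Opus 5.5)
The plan is to run the definition of Diophantine density along the sequence of convergents $p_n/q_n$ of $\xi$, and then transfer the resulting bound from $p_n/q_n$ to $\xi$ using Lemma \ref{lem:cnvgt_diff}. Fix $n$ large enough that the definition applies to the coprime pair $(p_n,q_n)$. This gives some $v_n\in\mathcal{A}$ with $v_n\le q_n$ and $\dnint{v_n p_n/q_n}\le C q_n^{-\delta}$. Let $u_n$ be the integer nearest to $v_n p_n/q_n$. By the triangle inequality and Lemma \ref{lem:cnvgt_diff},
\begin{equation*}
    |v_n\xi-u_n| \le \left|v_n\frac{p_n}{q_n}-u_n\right| + v_n\left|\xi-\frac{p_n}{q_n}\right| \le \frac{C}{q_n^{\delta}}+\frac{v_n}{q_n^{2}} \le \frac{C}{q_n^{\delta}}+\frac{1}{q_n}\le \frac{C+1}{q_n^{\delta}}.
\end{equation*}
The last step uses $\delta\le 1$. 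Since $v_n\le q_n$, we have $q_n^{-\delta}\le v_n^{-\delta}$. Dividing by $v_n$ then gives
\begin{equation*}
    \left|\xi-\frac{u_n}{v_n}\right|\le \frac{C+1}{v_n^{1+\delta}}.
\end{equation*}

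Next I must show that $v_n\to\infty$ along a subsequence, so that I get infinitely many distinct pairs. The argument mirrors the Remark. The quantity $|v_n\xi-u_n|$ tends to $0$. If $v_n$ took only finitely many values, some fixed $v$ would satisfy $\dnint{v\xi}\le|v\xi-u_n|\to 0$ along a subsequence, so $\dnint{v\xi}=0$, contradicting the irrationality of $\xi$. Hence $v_n$ is unbounded. Along a subsequence with $v_n\to\infty$, eventually $v_n^{\varepsilon}>C+1$, and then $(C+1)v_n^{-1-\delta}<v_n^{-(1+\delta-\varepsilon)}$. This is exactly the desired inequality, and it holds for infinitely many distinct $v_n\in\mathcal{A}$.

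The main obstacle is the coprimality of $(u_n,v_n)$. Reducing $u_n/v_n$ to lowest terms keeps the approximation quality, but it may push the denominator out of $\mathcal{A}$. Conversely, there is no a priori reason for $u_n$ and $v_n$ to be coprime. My first attempt would be the identity $v_np_n-u_nq_n=\pm q_n\dnint{v_np_n/q_n}$, which shows that any common divisor $d$ of $u_n$ and $v_n$ divides this integer of size at most $Cq_n^{1-\delta}$. I would then try to exploit a minimality choice of $v_n$: take the least admissible $v$, or vary $n$ to use the alternation of convergents. If neither argument closes the gap, I would record the result for pairs $(u,v)\in\mathbb{Z}\times\mathcal{A}$ without the coprimality condition. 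That weaker version is fully established by the steps above.
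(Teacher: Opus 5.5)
Your derivation of $|\xi-u_n/v_n|\le (C+1)v_n^{-1-\delta}$ and of the unboundedness of $v_n$ is correct, and it is essentially the paper's argument. Both use convergents, the density condition at $(p_n,q_n)$, Lemma~\ref{lem:cnvgt_diff}, and $v_n\le q_n$; your unboundedness step plays the role of the paper's ``only finitely many fractions'' step. The genuine gap is the one you flag yourself: nothing makes $\gcd(u_n,v_n)=1$. The paper does not close it either. It reduces $N_n/v_n$ to lowest terms $u/v$ and shows that infinitely many distinct reduced fractions arise. But the reduced denominator $v=v_n/\gcd(N_n,v_n)$ is only a divisor of an element of $\mathcal{A}$, so the paper's conclusion that $v\in\mathcal{A}$ is unjustified.

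No choice of $v_n$ can repair this, whether minimal or based on alternation of convergents, because the coprime version is false. Fix an irrational $\xi$. Let $B$ be the set of $v\in\mathbb{N}$ for which some $u\in\mathbb{Z}$ with $\gcd(u,v)=1$ satisfies $|v\xi-u|<1/4$, and put $\mathcal{A}=\mathbb{N}\setminus B$.

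Given coprime $(p,q)$ with $q\ge 4$, choose $1\le s\le q/2$ with $sp\equiv\pm 1\mod q$. Then $\dnint{sp/q}=1/q$, $2s\le q$, and $\dnint{2sp/q}=2/q$. If $s\in B$ with witness $u$, then $|2s\xi-2u|<1/2$. So the only integer that can lie within $1/4$ of $2s\xi$ is $2u$, which is not coprime to $2s$; hence $2s\in\mathcal{A}$. Thus $s$ or $2s$ lies in $\mathcal{A}$, so $\mathcal{A}$ has Diophantine density $1$ with $C=2$.

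Yet by construction no coprime $(u,v)\in\mathbb{Z}\times\mathcal{A}$ satisfies $|\xi-u/v|<1/(4v)$. Since $v^{-3/2}<1/(4v)$ for $v>16$, only finitely many coprime pairs satisfy the theorem's inequality with $\delta=1$ and $\varepsilon=1/2$.

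So the statement you fall back on is the correct form of the theorem: pairs $(u,v)\in\mathbb{Z}\times\mathcal{A}$, not required to be coprime. The version with coprime pairs whose denominators merely divide elements of $\mathcal{A}$ is also correct, and it is what the paper's reduction actually yields. Your proof of the non-coprime version is complete.
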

\begin{proof}
Let $p_{n}/q_{n}$ be the $n$-th convergent of $\xi$.  By Lemma \ref{lem:cnvgt_diff} we know that
\begin{equation*}
    \left\vert{ \xi - \frac{p_{n}}{q_{n}}}\right\vert < \frac{1}{q_{n}^{2}}.
\end{equation*}
The fact that $\mathcal{A}$ has a Diophantine density of $\delta$, implies that for large enough $n$, we can find a $v_{n} \in \mathcal{A}$ such that
\begin{equation*}
    \left\vert{\frac{p_{n}}{q_{n}} - \frac{N_{n}}{v_{n}}}\right\vert < \frac{C}{q_{n}^{\delta}v_{n}}.
\end{equation*}
Here, $N_{n}$ is the nearest integer to $v_{n} p_{n}/q_{n}$ and $C > 0$ is a constant depending only on $\mathcal{A}$. Thus, by the triangle inequality and the fact that $v_{n} \leq q_{n}$, we find that
    \begin{align*}
    \left\vert \xi - \frac{N_{n}}{v_{n}} \right\vert &\leq \left\vert{ \xi - \frac{p_{n}}{q_{n}}}\right\vert + \left\vert \frac{p_{n}}{q_{n}} -\frac{N_{n}}{v_{n}} \right\vert \\
    &\leq \frac{1}{q_{n}^{2}} + \frac{C}{q_{n}^{\delta} v_{n}} \\
    &\leq \frac{1}{v_{n}^{2}} + \frac{C}{v_{n}^{1+\delta}}.
\end{align*}
This gives the bound 
\begin{equation*}
    \left\vert \xi - \frac{N_{n}}{v_{n}} \right\vert \leq \frac{C+1}{v_{n}^{1+\delta}}.
\end{equation*}

So far we have shown that for each sufficiently large $n$, there exists a pair $(N_{n},v_{n}) \in \mathbb{Z} \times \mathcal{A}$, not necessarily coprime, which satisfies the inequality in the previous displayed equation.  A priori, it is not clear that the $\delta$-density condition, which allowed us to find such a pair, produces infinitely many distinct coprime pairs $(u,v) \in \mathbb{Z} \times \mathcal{A}$ for a given irrational number $\xi$.  Nevertheless, we will show that this is the case.

Indeed, if we assume that after reducing each pair $(N_{n},v_{n})$ there are only finitely many coprime pairs $(u,v)$, then there must be a single pair for which $N_{n}/v_{n} = u/v$ for infinitely many $n$.  If we denote the subsequence of $n$'s for which this is true by $(n_{k})_{k\geq 0}$, then we find that
\begin{equation*}
    \left\vert \xi -\frac{u}{v} \right\vert = \lim_{k\to\infty}\left\vert \frac{p_{n_{k}}}{q_{n_{k}}} -\frac{N_{n_k}}{v_{n_k}} \right\vert \leq \lim_{k\to\infty}\frac{C}{q_{n_k}^{\delta} v_{n_k}} = 0,
\end{equation*}
since $\delta > 0$.  This is clearly a contradiction, as $\xi$ is irrational.

Therefore, we may conclude that there are infinitely many distinct coprime pairs $(u,v) \in \mathbb{Z} \times \mathcal{A}$, such that
\begin{equation*}
    \left\vert \xi - \frac{u}{v} \right\vert \leq \frac{C+1}{v^{1+\delta}}.
\end{equation*}
Finally, for any $\varepsilon > 0$, when $v > (C+1)^{1/\varepsilon}$ we have 
\begin{equation*}
    \left\vert \xi - \frac{u}{v} \right\vert \leq \frac{C+1}{v^{1+\delta}} < \frac{1}{v^{1+\delta - \varepsilon}}.
\end{equation*}
With that, our proof is complete.
\end{proof}

Our next theorem provides a method for constructing sets of a given density.  To state it, we need the following definition for the counting function of a subset of the natural numbers.

\begin{definition}
    Let $\mathcal{A} \subseteq \mathbb{N}$ and for $n \geq 1$ let
    \begin{equation*}
        \mathcal{A}(n) := \vert \{a \in \mathcal{A} \, : \, a \leq n\} \vert
    \end{equation*}
    denote the number of elements of $\mathcal{A}$ less than or equal to $n$.
\end{definition}

\begin{theorem}\label{thm:set_diff_ddense}
Let $\mathcal{A} \subseteq \mathbb{N}$ and $\delta \in [0,1]$. If $\mathcal{A}(n) \ll n^{\delta}$ for sufficiently large $n$, then the set $\mathbb{N} \,\backslash \,\mathcal{A}$ has a Diophantine density of $1-\delta$.
\end{theorem}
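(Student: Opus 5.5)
We need to show: for coprime $(p,q)$ with $q$ large, there is some $v\le q$ with $v\notin\mathcal{A}$ such that $\dnint{vp/q}\le C/q^{1-\delta}$. The approach is a pigeonhole argument on the residues $vp \mod q$, which run through a permutation of the nonzero classes.

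Since $\gcd(p,q)=1$, the map $v\mapsto vp \mod q$ is a bijection from $\{1,\dots,q-1\}$ onto the nonzero residues mod $q$. Hence for each $r\in\{1,\dots,q-1\}$ there is a unique $v_r\in\{1,\dots,q-1\}$ with $v_rp\equiv r \mod q$. For such $v_r$ we have $\dnint{v_rp/q}=\min(r,q-r)/q\le r/q$.

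Now let $K:=\mathcal{A}(q)+1$ and consider the residues $r=1,\dots,K$. This requires $K\le q-1$, which holds for large $q$ when $\delta<1$, because $\mathcal{A}(q)\le Bq^{\delta}$ for some constant $B>0$. The $K$ values $v_1,\dots,v_K$ are distinct elements of $\{1,\dots,q\}$, and at most $\mathcal{A}(q)$ of them lie in $\mathcal{A}$. So some $v_r$ with $r\le K$ lies in $\mathbb{N}\setminus\mathcal{A}$ and satisfies $v_r\le q$. Consequently
\[
\min_{v\in\mathbb{N}\setminus\mathcal{A},\,v\le q}\dnint{\frac{vp}{q}}\le \frac{K}{q}\le\frac{Bq^{\delta}+1}{q}\le\frac{B+1}{q^{1-\delta}},
\]
and so $C=B+1$ works. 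This constant depends only on $\mathcal{A}$, through the implied constant in $\mathcal{A}(n)\ll n^{\delta}$.

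The main obstacle is the edge case $\delta=1$, where $\mathcal{A}(q)+1$ may exceed $q-1$. There the claimed density is $0$, so we only need $\dnint{vp/q}\le C$ for some $v\le q$ in the complement. We choose $C\ge 1/2$, which makes the bound trivial provided the complement meets $\{1,\dots,q\}$. The paper's convention is presumably that the minimum is attained, so if $\mathbb{N}\setminus\mathcal{A}$ is empty this case is degenerate. We handle this by taking $v=q$ when $q\notin\mathcal{A}$, giving $\dnint{p}=0$. Otherwise we note that the bound $\dnint{\cdot}\le 1/2\le C$ holds for any admissible $v$, and we remark that for $\delta=1$ the statement is vacuous apart from nonemptiness.

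A secondary point is that the residue $r=0$, i.e.\ $v=q$, could also be used; this only helps. For $\delta<1$ we need "sufficiently large $q$" only to ensure $Bq^{\delta}+1\le q-1$.
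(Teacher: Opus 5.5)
Your proof is correct and is essentially the paper's argument. Both rely on the fact that, since $p$ is invertible modulo $q$, each small distance $r/q$ is realized by a distinct $v \le q$, so $\mathcal{A}$ can block at most $\mathcal{A}(q)$ of the first $\mathcal{A}(q)+1$ such values, giving $\min \le (\mathcal{A}(q)+1)/q \ll q^{\delta-1}$. Your direct use of the bijection $v \mapsto vp \bmod q$ streamlines the paper's multiset analysis of $\Psi(p,q)$ (only the ``occurs at least once'' part is needed), and your treatment of $\delta = 1$, including the degenerate case $\mathcal{A} = \mathbb{N}$, covers an edge case the paper passes over silently.
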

\begin{proof}
Fix a coprime $(p,q) \in \mathbb{Z} \times \mathbb{N}$ with $q$ sufficiently large.  Our first task is to analyze the multiset
\begin{equation*}
    \Psi(p,q):= \left\{\dnint{\frac{k p}{q}}\,:\,  1 \leq k \leq q \right\}.
\end{equation*}
Once we have done that, it will be straightforward to bound the minimum element in the submultiset
\begin{equation*}
    \left\{\dnint{\frac{k p}{q}}\,:\,  1 \leq k \leq q, k \in \mathbb{N}\,\backslash\,\mathcal{A} \right\}.
\end{equation*}
To this end, note that for each $k$ we have
\begin{equation*}
\dnint{\frac{kp}{q}} = \frac{\left\vert{kp-N_{k}q}\right\vert}{q},
\end{equation*}
where $N_{k}$ is the nearest integer to $kp/q$.  Since $\dnint{x} \leq 1/2$ for any real number $x$, we obtain the bound
\begin{equation*}
\vert{k p -  N_{k} q }\vert \leq \frac{q}{2}.
\end{equation*}
Consequently, we find that
\begin{equation*}
\dnint{\frac{kp}{q}} \in \left\{0,\frac{1}{q},\frac{2}{q},\dots,\frac{\lfloor{q/2}\rfloor}{q}\right\}.
\end{equation*}
Next, by definition we have
\begin{equation*}
    \dnint{\frac{p}{q}} = \dnint{\frac{p \mod q}{q}} = \frac{1}{q}\min\left\{p \mod q, q - (p \mod q)\right\}.
\end{equation*}
Thus, for $p_{1},p_{2} \in \mathbb{Z}$, we have
\begin{equation*}
    \dnint{\frac{p_{1}}{q}} = \dnint{\frac{p_{2}}{q}},
\end{equation*}
if and only if $p_{1} \equiv \pm p_{2} \mod q$.

For our purposes, this enables us to determine when
\begin{equation*}
    \dnint{\frac{k_{1} p}{q}} = \dnint{\frac{k_{2} p}{q}},
\end{equation*}
for $1\leq k_{1},k_{2}\leq q$.  Namely, since $p$ is invertible mod $q$, the equality holds if and only if $k_{1} \equiv \pm k_{2} \mod q$.  Or equivalently, since $k_{1},k_{2} \leq q$, if and only if $k_{1} = k_{2}$ or $k_{1} = q-k_{2}$.

From this fact we deduce that each number in the set
\begin{equation*}
\left\{0,\frac{1}{q},\frac{2}{q},\dots,\frac{\lfloor{q/2}\rfloor}{q}\right\}
\end{equation*}
occurs at least once and at most twice in the multiset $\Psi(p,q)$.  In particular, $0$ occurs once for every $q$ and $1/2$ occurs once for even $q$, with all other elements occurring exactly twice.

Now that we understand the structure of the multiset $\Psi(p,q)$, our next goal is to determine what can be said about the minimum remaining element after removing the submultiset of elements which correspond to elements of $\mathcal{A}$ less than or equal to $q$.  To accomplish this, recall that the requirements of the theorem dictate that for large enough $q$ and $c > 0$,
\begin{equation*}
\mathcal{A}(q) \leq c q^{\delta}.
\end{equation*}
Suppose that we remove the smallest $\lfloor{c q^{\delta}}\rfloor$ elements from the multiset $\Psi(p,q)$.  Since each number $k/q$ occurs at least once for $0 \leq k \leq \lfloor{q/2}\rfloor$, the smallest remaining element may be roughly bounded above by ${\lfloor{c q^{\delta}}\rfloor/q}$.

In other words, we find that
\begin{align*}
\min_{v\in\mathbb{N} \,\backslash \,\mathcal{A}, v\leq q}\dnint{\frac{v p}{q}} \leq \frac{\lfloor{c q^{\delta}}\rfloor}{q} \leq \frac{c}{q^{1-\delta}}.
\end{align*}
We have thus demonstrated that there exists a constant $C > 0$, depending only on $\mathbb{N} \,\backslash \,\mathcal{A}$, such  that for any coprime $(p,q)\in\mathbb{Z}\times\mathbb{N}$ with $q$ sufficiently large,
\begin{equation*}
    \min_{v\in\mathbb{N} \,\backslash \,\mathcal{A}, v\leq q}\dnint{\frac{v p}{q}} \leq \frac{C}{q^{1-\delta}}.
\end{equation*}
This completes the proof.
\end{proof}

In the next section we will construct a set whose Diophantine density can be determined explicitly, as well as present the corresponding restricted analog of Dirichlet's theorem.

\section{Approximations avoiding Piatetski-Shapiro sequences}

In this section, we investigate approximations whose denominators avoid elements of Piatetski-Shapiro sequences, which are sequences of the form $(\lfloor{n^{c}}\rfloor)_{n\geq 1}$ where $c > 1$.  Often, $c$ is defined to be non-integral, but here we do not impose such a restriction.  These sequences are named after Piatetski-Shapiro \cite{piatetski-shapiro1953}, who showed that for $c \in (1,12/11)$, there are infinitely many primes of the form $\lfloor{n^{c}}\rfloor$.

To prepare for the remainder of this section, let us define the set 
\begin{equation*}
    \mathcal{PS}_{c} := \{\lfloor{n^{c}}\rfloor\,\vert\, n \in \mathbb{N}\},
\end{equation*}
consisting of the elements of a given Piatetski-Shapiro sequence.  The following lemma provides an upper bound for the counting function of $\mathcal{PS}_{c}$.

\begin{lemma}\label{lemma:ps_seq}
    The counting function for a Piatetski-Shapiro set satisfies the bound
    \begin{equation*}
    \mathcal{PS}_{c}(n) \ll n^{1/c}.
\end{equation*}
\end{lemma}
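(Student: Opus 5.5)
The plan is to bound $\mathcal{PS}_{c}(n)$ by counting indices rather than values. Since $c>1$, the map $m \mapsto \lfloor{m^{c}}\rfloor$ is nondecreasing on $\mathbb{N}$. Every element of $\mathcal{PS}_{c}$ that is at most $n$ has the form $\lfloor{m^{c}}\rfloor$ for at least one $m\in\mathbb{N}$ with $\lfloor{m^{c}}\rfloor \leq n$. So $\mathcal{PS}_{c}(n)$ is at most the number of such indices $m$. Repeated values are harmless, because we only need an upper bound.

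The key step is to characterise these indices. Since $n$ is an integer, $\lfloor{m^{c}}\rfloor \leq n$ holds if and only if $m^{c} < n+1$, that is, if and only if $m < (n+1)^{1/c}$. Hence
\begin{equation*}
    \mathcal{PS}_{c}(n) \leq \left\vert\{m \in \mathbb{N} \,:\, m < (n+1)^{1/c}\}\right\vert \leq (n+1)^{1/c}.
\end{equation*}

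Finally, for $n \geq 1$ we have $n+1 \leq 2n$, so $(n+1)^{1/c} \leq 2^{1/c} n^{1/c} \leq 2 n^{1/c}$. This gives $\mathcal{PS}_{c}(n) \ll n^{1/c}$ with implied constant $2$, uniformly in $c>1$.

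No step is a serious obstacle. The only point needing care is the equivalence $\lfloor{x}\rfloor \leq n \iff x < n+1$ for integer $n$. One could instead use the cruder implication $\lfloor{m^{c}}\rfloor \leq n \Rightarrow m^{c} < n+1$, which is all the argument uses.

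This bound is what is needed to apply Theorem \ref{thm:set_diff_ddense} with $\delta = 1/c$. That application then gives $\mathbb{N}\,\backslash\,\mathcal{PS}_{c}$ Diophantine density $1-1/c$.
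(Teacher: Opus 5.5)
Your proof is correct and follows essentially the same route as the paper, which also counts the indices $k$ with $\lfloor k^{c}\rfloor \leq n$ through the equivalence with $k < (n+1)^{1/c}$. The paper turns this into an exact case-by-case formula for $\mathcal{PS}_{c}(n)$, tacitly using that $k \mapsto \lfloor k^{c}\rfloor$ is injective for $c>1$; your remark that repeated values only help an upper bound lets you skip both the case split and the injectivity, and you also make the final step $(n+1)^{1/c} \leq 2n^{1/c}$ explicit.
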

\begin{proof}
Our goal is to show that
\begin{equation*}
    \mathcal{PS}_{c}(n) = \begin{cases}
        \lfloor{n^{1/c}}\rfloor &\text{if } c \in \mathbb{N}, \\
        \lfloor{(n+1)^{1/c}}\rfloor &\text{if } c \not\in \mathbb{N}, (n+1)^{1/c} \not \in \mathbb{N}, \\
        (n+1)^{1/c} - 1 &\text{if } c \not\in \mathbb{N}, (n+1)^{1/c} \in \mathbb{N},
    \end{cases}
\end{equation*}
which then implies the desired inequality $\mathcal{PS}_{c}(n) \ll n^{1/c}$.  To accomplish this we will count the number of positive integers $k$ such that $\lfloor{k^{c}}\rfloor \leq n$.

Let us first assume that $c \in \mathbb{N}$.  Then $\lfloor{k^{c}}\rfloor = k^{c}$ and the inequality $k^{c} \leq n$ implies that $k \leq  n^{1/c}$.  There are $\lfloor{n^{1/c}}\rfloor$ of such $k$, which completes the first case.  

If $c \notin \mathbb{N}$, then $\lfloor{k^{c}}\rfloor \leq n$ implies that $k^{c} < n + 1$ and consequently $k < (n + 1)^{1/c}.$  In this case we have two potential outcomes.  If $(n + 1)^{1/c}$ is not an integer, then there are $\lfloor{(n + 1)^{1/c}}\rfloor$ such integers $k$.  On the other hand, if $(n + 1)^{1/c}$ happens to be an integer, then there are $(n + 1)^{1/c} - 1$ integers $k$ less than $(n + 1)^{1/c}$.  With the value of the counting function $\mathcal{PS}_{c}(n)$ established, our proof is now complete.
\end{proof}
\begin{theorem}\label{thm:piatetski-shapiro-dirichlet}
Let $\xi$ be an irrational number and let $c > 1$ be a real number.  For any $\varepsilon > 0$, there are infinitely many coprime pairs $(p,q) \in \mathbb{Z} \times (\mathbb{N}\,\backslash \, \mathcal{PS}_{c})$, such that
     \begin{equation*}
    \left\vert{\xi- \frac{p}{q}}\right\vert < \frac{1}{q^{2-1/c-\varepsilon}}.
\end{equation*} 
\end{theorem}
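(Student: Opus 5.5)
The plan is to chain together the three results already established in this section and the previous one. First, by Lemma \ref{lemma:ps_seq}, the counting function of $\mathcal{PS}_{c}$ satisfies $\mathcal{PS}_{c}(n) \ll n^{1/c}$. Since $c > 1$, the exponent $\delta := 1/c$ lies in $(0,1) \subseteq [0,1]$, so the hypothesis of Theorem \ref{thm:set_diff_ddense} is met with $\mathcal{A} = \mathcal{PS}_{c}$. Invoking that theorem, we conclude that $\mathbb{N}\,\backslash\,\mathcal{PS}_{c}$ has a Diophantine density of $1 - 1/c$.

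Next, we check that this density is strictly positive, which is exactly what Theorem \ref{thm:main} requires. Here we use $c > 1$ once more: it gives $1/c < 1$, hence $1 - 1/c > 0$. This is the only point where the strict inequality $c > 1$ matters beyond making the sequence well defined, so it should be stated explicitly.

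Finally, we apply Theorem \ref{thm:main} with $\mathcal{A} = \mathbb{N}\,\backslash\,\mathcal{PS}_{c}$ and $\delta = 1 - 1/c$. For any irrational $\xi$ and $\varepsilon > 0$, it yields infinitely many coprime pairs $(p,q) \in \mathbb{Z} \times (\mathbb{N}\,\backslash\,\mathcal{PS}_{c})$ with
\begin{equation*}
\left\vert \xi - \frac{p}{q} \right\vert < \frac{1}{q^{1 + (1-1/c) - \varepsilon}} = \frac{1}{q^{2 - 1/c - \varepsilon}},
\end{equation*}
which is the claimed statement after renaming $(u,v)$ as $(p,q)$.

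There is no real obstacle, since the argument is a direct composition of earlier results. The only care needed is in matching hypotheses. Lemma \ref{lemma:ps_seq} gives a bound valid for all $n$, which in particular holds for sufficiently large $n$, as Theorem \ref{thm:set_diff_ddense} requires. The resulting density $1-1/c$ must be checked to lie in $(0,1]$ so that both Theorem \ref{thm:set_diff_ddense} and Theorem \ref{thm:main} apply.
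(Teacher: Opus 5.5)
Your proposal is correct and matches the paper's proof step for step. Lemma~\ref{lemma:ps_seq} gives $\mathcal{PS}_{c}(n) \ll n^{1/c}$, and Theorem~\ref{thm:set_diff_ddense} then gives $\mathbb{N}\,\backslash\,\mathcal{PS}_{c}$ Diophantine density $1-1/c$; since $c>1$ makes this density positive, Theorem~\ref{thm:main} applies and yields the exponent $1+(1-1/c)-\varepsilon = 2-1/c-\varepsilon$.
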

\begin{proof}
    From Lemma \ref{lemma:ps_seq} we know that $\mathcal{PS}_{c}(n) \ll n^{1/c}$.  Consequently, Theorem \ref{thm:set_diff_ddense} tells us that $\mathbb{N}\,\backslash\,\mathcal{PS}_{c}$ has a density of $1 - 1/c$.  Finally, an application of Theorem \ref{thm:main} completes the proof, keeping in mind that we have required that $c > 1$ so that $1/c < 1$.
\end{proof}
As an example, setting $c = 2$ in Theorem \ref{thm:piatetski-shapiro-dirichlet} shows that for any irrational number $\xi$ and any $\varepsilon > 0$, there are infinitely many coprime pairs $(p,q) \in \mathbb{Z} \times \mathbb{N}$ where $q$ is not a perfect square, such that
\begin{equation*}
    \left\vert{\xi- \frac{p}{q}}\right\vert < \frac{1}{q^{3/2-\varepsilon}}.
\end{equation*}

\section{Closing remarks}
In this note we have demonstrated that if a subset of $\mathbb{N}$ satisfies a suitable density condition, then one can obtain a restricted analog of a theorem of Dirichlet, where the denominators of approximating rationals are members of the given subset.  Moreover, we showed how to construct such subsets by taking complements of subsets of $\mathbb{N}$ whose counting function satisfies a certain growth condition.

In the introduction, we used Matom{\"a}ki's theorem as the paradigmatic example of restricted Diophantine approximation.  Unfortunately, our methods are too coarse to prove similar results.  To see this, let $\mathcal{C}_{1}$ denote the set of composite numbers along with $1$.  The counting function for $\mathcal{C}_{1}$ is given by
\begin{equation*}
    \mathcal{C}_{1}(n) = n - \pi(n), 
\end{equation*}
where $\pi(n)$ is the prime counting function.  It is known that $\pi(n)$ possesses the bounds
\begin{equation*}
    \frac{n}{\log(n)} < \pi(n) < C\frac{n}{\log(n)}
\end{equation*}
for $n > 17$ and $C = 30\log(113)/113$ (see \cite{barkley1962}).  Therefore, for sufficiently large $n$, the counting function for $\mathcal{C}_{1}$ is bounded by
\begin{equation*}
    n - C\frac{n}{\log(n)} <  \mathcal{C}_{1}(n) < n - \frac{n}{\log(n)},
\end{equation*}
showing that $\mathcal{C}_{1}(n) \asymp n$.

Consequently, by Theorem \ref{thm:set_diff_ddense}, the set of primes $\mathcal{P} = \mathbb{N} \,\backslash\, \mathcal{C}_{1}$ has a Diophantine density of $0$, making it unsuitable for use in Theorem \ref{thm:main}.
\bibliographystyle{plain} 
\bibliography{refs} 
\end{document}